\let\le\undefined
\DeclareMathSymbol{\le}{\mathrel}{AMSa}{"36}         
\let\ge\undefined
\DeclareMathSymbol{\ge}{\mathrel}{AMSa}{"3E}         
\let\empty\undefined
\DeclareMathSymbol{\empty}{\mathord}{AMSb}{"3F}      
\renewcommand{\:}{\colon}
\renewcommand{\.}{\mskip 0.5\thinmuskip}
\newcommand{\rarrow}{\longrightarrow}
\newcommand{\lrarrow}{\.\relbar\joinrel\relbar\joinrel\rightarrow\.}
\newcommand{\llarrow}{\.\leftarrow\joinrel\relbar\joinrel\relbar\.}
\newcommand{\dsb}{\dotsb}
\newcommand{\dsc}{\dotsc}
\newcommand{\ot}{\otimes}
\newcommand{\B}{{\mathcal B}}
\newcommand{\C}{{\mathcal C}}
\newcommand{\F}{{\mathcal F}}
\newcommand{\E}{{\mathcal E}}
\newcommand{\Z}{{\mathcal Z}}
\renewcommand{\H}{{\mathcal H}}
\newcommand{\T}{{\mathcal T}}
\newcommand{\R}{{\mathbb R}}
\DeclareMathOperator{\Tor}{Tor}
\DeclareMathOperator{\Id}{Id}
\theoremstyle{plain}
\newtheorem*{thm}{Theorem}
\newtheorem*{cor}{Corollary}
\newtheorem*{lem}{Lemma}
\begin{document}

\title{The algebra of closed forms in a disk is Koszul}
\author{Leonid Positselski}

\address{Faculty of Mathematics and Laboratory of Algebraic Geometry,
National Research\- University Higher School of Economics,
Moscow 117312, and Sector of Algebra and Number Theory, Institute
for Information Transmission Problems, Moscow 127994, Russia}
\email{posic@mccme.ru}

\begin{abstract}
 We prove that the algebra of closed differential forms in
an (algebraic, formal, or analytic) disk with logarithmic
singularities along several coordinate hyperplanes is
(both nontopologically and topologically) Koszul.
 The connection with variations of mixed Hodge--Tate structures
is discussed in the introduction.
\end{abstract}

\keywords{Disk without several coordinate hyperplanes, closed
differential forms with logarithmic singularities, mixed
Hodge--Tate sheaves, Koszul algebras, Koszul modules,
quasi-algebras with external multiplications, topological
Koszulity}

\maketitle

\vspace*{-\medskipamount}
\section*{Introduction}

 In this paper we consider the algebras of closed differential forms
in a disk, regular outside of several chosen coordinate hyperplanes
and having at most logarithmic singularities along these hyperplanes,
with respect to the operation of product of differential forms.
 Such algebras occur in connection with mixed Hodge--Tate sheaves
on smooth algebraic varieties~\cite{Lev}.
 More precisely, the above algebras of closed forms in a disk
play a role in the local description of such sheaves in
the neighborhood of a point that may belong either to the original
variety, or to a normal crossing divisor lying at infinity in its
smooth compactification.

 Let $D$ be a complex analytic disk and $V$ be the complement to
several coordinate hyperplanes in~$D$.
 According to~\cite{Lev}, the real mixed Hodge--Tate sheaves on $V$
with admissible singularities in $D\setminus V$ can be described in
terms of an associative, supercommutative, positively internally
graded DG-algebra $\R\H\T_{(V,D)}$.
 The cohomology of the DG-algebra $\R\H\T_{(V,D)}$ lie in the union
of two half-lines: the diagonal where the internal grading is equal
to the cohomological one and the axis where the cohomological grading
is equal to one.
 The diagonal part of the cohomology is isomorphic to the algebra of
closed forms in $V$ with logarithmic singularities along
$D\setminus V$, while the part lying in the latter axis (which is
responsible for the mixed Hodge structures over a point) has
a one-dimensional component in every positive internal degree.

 The commutative Hopf algebra describing the category of mixed
Hodge--Tate sheaves on $(V,D)$ in the Tannakian formalism is
the algebra of zero cohomology of the reduced bar-construction of
the DG-algebra $\R\H\T_{(V,D)}$.
 It follows from the Koszul property of the algebra of closed forms,
proven in this paper, that this bar-construction has no cohomology
in the cohomological gradings different from zero.
 To deduce this, it suffices to consider the spectral sequence
converging from the cohomology of the bar-construction of
the cohomology algebra of $\R\H\T_{(V,D)}$ to the cohomology of
the bar-construction of $\R\H\T_{(V,D)}$ itself, and use the well-known
description of the Ext algebra of the connected direct sum of
augmented algebras~\cite[Proposition~1.1 of Chapter~3]{PP}.
 The commutative Hopf algebra of zero cohomology is the cofree product
of the Hopf algebra quadratic dual to the algebra of closed
differential forms and the cofree Hopf algebra with homogeneous
cogenerators indexed by the positive integers.

 Various Koszul properties of the algebras of motivic cohomology
(Milnor K\+theory, Galois cohomology, etc.)\ play an important role in
the theory of motives (see~\cite{PV,Pbogom} and the other present
author's papers on the subject; the arguments above can be viewed as
a new illustration to this general observation).
 However, the present status of these properties is mostly that
of conjectures rather than theorems.
 The algebras of closed differential forms in a disk, which are
considered in this paper, provide an interesting family of algebras
of motivic significance whose Koszulity can be readily established.
 That will be demonstrated below.

 The author is grateful to Andrey Levin for posing the problem.
 This work was mostly done when both A.~L. and I were visiting
Max-Planck-Institut f\"ur Mathematik in Bonn in the Spring of 2003,
and I wish to thank the Institute for its hospitality.
 The author was supported by a grant from P.~Deligne 2004 Balzan prize,
a Simons Foundation grant, and RFBR grants while finalizing
the arguments and writing the paper up.

\section{Module Koszulity} \label{module-koszul}

 Let $D$ be a disk with the coordinates $z_1$,~\dots, $z_u$.
 With few exceptions, it will not matter for us which particular
geometric category is presumed.
 So $D$ can be the algebraic affine space over a field of
characteristic zero, the formal disk over such a field, a complex
analytic disk, or a smooth real disk.
 One can also take $D$ to be the spectrum of the algebra of polynomials
or formal power series with divided powers over a field of prime
characteristic.

 Let $0\le v\le u$.
 For any $1\le s\le v$, let $L_s$ denote the coordinate hyperplane
$\{z_s=0\}\subset D$.
 Denote by $\Omega$ the de~Rham DG\+algebra of regular differential
forms in $D\setminus\bigcup_{s=1}^v L_s$ with logarithmic singularities
along~$L_s$.
 Let $Z\subset\Omega$ denote the subalgebra of closed forms, i.~e.,
the kernel of the de~Rham differential $d\:\Omega\rarrow\Omega$.
 Let $H=H(\Omega,d)$ be the cohomology algebra of~$\Omega$.

 Denote by $A$ the exterior algebra generated by the closed $1$\+forms
$dz_s/z_s$, \ $s\le v$, and $dz_r$, \ $r>v$.
 It is only important for us to have control over the homological
properties of the $A$\+modules $\Omega$ and~$H$.
 In particular, one can replace the disk with any space endowed with
an \'etale map to the disk and satisfying an appropriate version of
the Poincar\'e lemma.
 In the above examples, $\Omega$ is the free $A$\+module generated
by $\Omega^0$, and $H$ is the exterior algebra generated by $dz_s/z_s$
with the obvious $A$\+module structure in which $dz_s/z_s\in A$ act
freely and $dz_r\in A$ act trivially in~$H$.

 The notion of a \emph{Koszul algebra}, introduced by
S.~Priddy~\cite{Pr} in the context of locally finite-dimensional
algebras with respect to an additional grading and studied mostly
for algebras with a finite-dimensional space of generators~\cite{PP},
can be easily generalized to the completely infinite-dimensional
case~\cite{PV}.
 The same applies~\cite{Pbogom} to the notion of a \emph{Koszul module}
introduced by A.~Beilinson, V.~Ginzburg, and W.~Soergel~\cite{BGS}.
 The main difference with the locally finite-dimensional case is
that in the infinite-dimensional situation the quadratic duality
connects algebras with coalgebras and modules with comodules.

 Let us recall these definitions.
 A nonnegatively graded algebra $A$ over a field~$k$ is called Koszul
if $A_0=k$ and $\Tor^A_{ij}(k,k)=0$ for $i\ne j$.
 For a Koszul algebra $A$, a nonnegatively graded left $A$\+module $M$
is called Koszul if $\Tor^A_{ij}(k,M)=0$ for $i\ne j$.
 Here the first index $i$ denotes the homological grading of the $\Tor$,
while the second index $j$ denotes the internal grading (see 
the references above).
 Note that for any nonnegatively graded algebra $A$ and module $M$
the condition $A_0=k$ implies the vanishing of $\Tor^A_{ij}(k,k)$ and
$\Tor^A_{ij}(k,M)$ for $i>j$.
 We will use the lower and upper indices interchangeably for denoting
our internal gradings; no sign change is presumed when passing from
the upper to the lower indices and back.

\begin{lem}
 Let $M$ be a nonnegatively graded left module over a Koszul
algebra~$A$.
 Then the graded vector space $\Tor_i^A(k,M)$ is concentrated in
the gradings $i$ and $i+1$ for all~$i$ if and only if
$M_+=M_1\oplus M_2\oplus\dsb$ is a Koszul left $A$-module in
the grading shifted by~$1$ (i.~e., so that the component $M_1$
be put in degree~$0$).
\end{lem}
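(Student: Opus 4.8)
The plan is to relate the homology of $M$ to that of its submodule $M_+$ through the short exact sequence of graded $A$\+modules
\[0\rarrow M_+\rarrow M\rarrow M_0\rarrow 0,\]
in which $M_0$ is placed in internal degree~$0$. Since the augmentation ideal $A_+$ strictly raises the internal degree while $M_0$ lives in degree~$0$, it acts by zero on $M_0$; hence $M_0$ is a direct sum of copies of the trivial module~$k$ concentrated in degree~$0$. Therefore $\Tor^A_{ij}(k,M_0)\cong\Tor^A_{ij}(k,k)\ot_k M_0$, and the Koszulity of~$A$ makes this group vanish unless $j=i$: the homology of $M_0$ sits on the diagonal $j=i$.

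First I would restate the two conditions as support statements for the bigraded $\Tor$\+spaces. The left-hand side says exactly that $\Tor^A_{ij}(k,M)=0$ for $j\ne i,\,i+1$, so that the homology of $M$ is carried by the two adjacent lines $j=i$ and $j=i+1$. For the right-hand side, writing $N$ for $M_+$ with the grading shifted so that $M_1$ lands in degree~$0$, one has $\Tor^A_{ij}(k,N)=\Tor^A_{i,j+1}(k,M_+)$; thus $N$ is Koszul, i.e.\ $\Tor^A_{ij}(k,N)=0$ for $i\ne j$, if and only if $\Tor^A_{ij}(k,M_+)=0$ for $j\ne i+1$ in the original grading. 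So the equivalence to be proven is that the homology of $M$ lies on the two lines $j=i,\,i+1$ exactly when the homology of $M_+$ lies on the single line $j=i+1$, and both are to be read off the long exact sequence
\[\dsb\rarrow\Tor^A_{ij}(k,M_+)\rarrow\Tor^A_{ij}(k,M)\rarrow\Tor^A_{ij}(k,M_0)\rarrow\Tor^A_{i-1,j}(k,M_+)\rarrow\dsb\]
attached to the short exact sequence above.

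For $(\Leftarrow)$ I would use exactness at $\Tor^A_{ij}(k,M)$: its two neighbours are $\Tor^A_{ij}(k,M_+)$, nonzero only on $j=i+1$, and $\Tor^A_{ij}(k,M_0)$, nonzero only on $j=i$, so $\Tor^A_{ij}(k,M)$ must vanish for $j\ne i,\,i+1$. For $(\Rightarrow)$ I would split off two ranges of~$j$. When $j\le i$ the group $\Tor^A_{ij}(k,M_+)$ vanishes automatically, since $M_+$ is concentrated in internal degrees $\ge 1$ and the lower-triangular vanishing recalled before the statement, applied to $N=M_+$ shifted into degrees $\ge 0$, gives $\Tor^A_{ij}(k,M_+)=0$ for $i\ge j$. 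When $j\ge i+2$ I would use the stretch $\Tor^A_{i+1,j}(k,M_0)\rarrow\Tor^A_{ij}(k,M_+)\rarrow\Tor^A_{ij}(k,M)$ of the long exact sequence: the left term vanishes because the homology of $M_0$ is diagonal and $j\ne i+1$, while the right term vanishes by hypothesis, so $\Tor^A_{ij}(k,M_+)=0$. Together these two ranges cover all $j\ne i+1$.

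The diagram chases themselves are routine; the step that needs care is the bookkeeping of the grading shift, ensuring that the diagonal $j=i$ supporting the homology of $M_0$ and the shifted diagonal $j=i+1$ supporting the homology of $M_+$ are disjoint, so that the long exact sequence cleanly separates the two lines. The one genuinely substantive input is the lower-triangular vanishing for $M_+$, which must be invoked in the grading where $M_+$ begins in degree~$1$; this is precisely what disposes of the near-diagonal region $j\le i$ without using the hypothesis, and is where I would expect the main (if modest) obstacle to lie.
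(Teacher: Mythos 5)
Your proof is correct and follows essentially the same route as the paper: the short exact sequence $0\rarrow M_+\rarrow M\rarrow M_0\rarrow 0$, its long exact sequence in $\Tor^A(k,{-})$, and the fact that $\Tor^A_i(k,M_0)$ is concentrated in degree~$i$ because $M_0$ is a trivial module over the Koszul algebra~$A$. The paper's proof is just a terser version of yours; your explicit invocation of the lower-triangular vanishing for $M_+$ (to kill the range $j\le i$ in the forward direction) is a detail the paper leaves implicit, having recorded that vanishing in the paragraph preceding the Lemma.
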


\begin{proof}
 The assertion follows from the long exact sequence
$\dsb\rarrow\Tor^A_{i+1}(k,M_0)\rarrow\Tor^A_i(k,M_+)\rarrow
\Tor^A_i(k,M)\rarrow\Tor^A_i(k,M_0)\rarrow\dsb$,
since the graded vector space $\Tor^A_i(k,M_0)$ is concentrated
in degree~$i$ (due to Koszulity of the algebra~$A$).
\end{proof}

\begin{thm}
 Let $(\Omega,d)$ be a nonnegatively graded DG\+algebra over
a field~$k$ with the differential of degree~$1$; set $Z=\ker d$ and
$H=H(\Omega,d)$.
 Let $A$ be a Koszul algebra and $f\:A\rarrow Z$ be a morphism of
graded algebras.
 Assume that\/ $\Omega$ and $H$ are Koszul left $A$\+modules in
the module structures induced by~$f$.
 Then $Z^+ = Z^1\oplus Z^2\oplus\dsb$ is a Koszul left $A$\+module in
the grading shifted by~$1$.
\end{thm}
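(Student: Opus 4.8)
The plan is to deduce the statement from the preceding Lemma applied to $M=Z$: since $\Tor^A_{ij}(k,Z)=0$ automatically for $j<i$, it suffices to show that $\Tor^A_{ij}(k,Z)=0$ for all $j\ge i+2$, for then $\Tor^A_i(k,Z)$ is concentrated in the gradings $i$ and $i+1$ and the Lemma yields Koszulity of $Z^+=Z^1\oplus Z^2\oplus\dsb$ in the grading shifted by~$1$. To set up the bookkeeping, write $B=d\Omega$ for the module of exact forms. Since $f(A)\subseteq Z=\ker d$ and $d$ is a derivation, $Z$ and $B$ are graded $A$\+submodules of $\Omega$, and the de~Rham differential produces two short exact sequences of graded $A$\+modules, namely $0\rarrow B\rarrow Z\rarrow H\rarrow 0$ and $0\rarrow Z\rarrow\Omega\rarrow B(1)\rarrow 0$, where $B(1)$ denotes $B$ with its internal grading shifted by~$1$ to account for $d$ raising the form degree (so that $\Tor^A_i(k,B(1))_j=\Tor^A_i(k,B)_{j+1}$).

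Next I would run the two long exact sequences of $\Tor^A(k,-)$ attached to these and feed in the hypotheses. Because $A$ is Koszul and $\Omega$, $H$ are Koszul $A$\+modules, the graded spaces $\Tor^A_i(k,\Omega)$ and $\Tor^A_i(k,H)$ are concentrated in the single internal degree $j=i$. Restricting the sequence $0\rarrow B\rarrow Z\rarrow H\rarrow 0$ to internal degree $j$ with $j\ge i+2$, the neighbouring terms $\Tor^A_{i+1}(k,H)_j$ and $\Tor^A_i(k,H)_j$ both vanish (as $j\ne i+1$ and $j\ne i$), giving an isomorphism $\Tor^A_i(k,B)_j\cong\Tor^A_i(k,Z)_j$. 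Restricting $0\rarrow Z\rarrow\Omega\rarrow B(1)\rarrow 0$ to internal degree $j-1$, the terms $\Tor^A_i(k,\Omega)_{j-1}$ and $\Tor^A_{i-1}(k,\Omega)_{j-1}$ vanish (as $j-1\ne i$ and $j-1\ne i-1$), so the connecting map gives an isomorphism $\Tor^A_i(k,B)_j\cong\Tor^A_{i-1}(k,Z)_{j-1}$. Composing, for every $i\ge 1$ and every $j\ge i+2$ one obtains $\Tor^A_i(k,Z)_j\cong\Tor^A_{i-1}(k,Z)_{j-1}$, an isomorphism running along the diagonal $j-i=\mathrm{const}\ge 2$.

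Finally I would anchor this chain at homological degree~$0$. Iterating the displayed isomorphism down to $i=0$ identifies $\Tor^A_i(k,Z)_j$ with $\Tor^A_0(k,Z)_{j-i}\cong\Tor^A_0(k,B)_{j-i}$, and the second short exact sequence at internal degree $j-i-1$ exhibits $\Tor^A_0(k,B)_{j-i}$ as a quotient of $\Tor^A_0(k,\Omega)_{j-i-1}$ (the following term $\Tor^A_{-1}(k,Z)_{j-i-1}$ being zero). Since $j-i-1\ge 1$, Koszulity of $\Omega$ forces $\Tor^A_0(k,\Omega)_{j-i-1}=0$, whence $\Tor^A_i(k,Z)_j=0$, as required.

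The hard part is exactly this anchoring step, together with recognizing that it is needed at all. The two long exact sequences by themselves only produce isomorphisms \emph{along} the diagonals $j-i\ge 2$, relating $\Tor^A_i(k,Z)$ to $\Tor^A_{i-1}(k,Z)$ without ever exhibiting a vanishing, so a naive induction on the homological degree~$i$ cannot close: the diagonals are self-referential. The point is to run the chain of isomorphisms downward to the edge $i=0$, where the vanishing of $\Tor^A_{-1}$ breaks the symmetry and the seed term dies. Both Koszulity hypotheses enter in tandem here: Koszulity of $H$ is used to pass between $Z$ and $B$, while Koszulity of $\Omega$ is used to pass from $B$ down to $Z$ one homological degree lower and, ultimately, to annihilate the boundary term.
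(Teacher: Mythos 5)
Your proof is correct, and it reaches the paper's key intermediate statement --- that $\Tor^A_i(k,Z)$ is concentrated in internal degrees $i$ and $i+1$, after which the Lemma finishes the job --- by a genuinely different route. The paper forms the single infinite complex $\dsb\rarrow\Omega(2)\rarrow\Omega(1)\rarrow Z$ of graded $A$\+modules, whose cohomology is a shifted copy of $H$ in every position, and plays off the two hyperhomology spectral sequences for $\Tor^A(k,{-})$: Koszulity of $H$ forces the limit to sit on the diagonal, Koszulity of $\Omega$ handles every column except the zeroth, and the only differentials that can reach ${}''\!E^1_{0,i}=\Tor^A_i(k,Z)$ come from terms concentrated in internal degree $i+1$. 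You instead split the de~Rham differential through the exact forms $B=d\Omega$, obtaining the two short exact sequences $0\rarrow B\rarrow Z\rarrow H\rarrow 0$ and $0\rarrow Z\rarrow\Omega\rarrow B(1)\rarrow 0$; their long exact Tor sequences give the staircase isomorphisms $\Tor^A_i(k,Z)_j\simeq\Tor^A_{i-1}(k,Z)_{j-1}$ along each diagonal $j-i\ge2$, which you then kill at the anchor $i=0$ using $\Tor^A_0(k,\Omega)_{j-i-1}=0$. These are two presentations of one mechanism --- your short exact sequences are precisely what the paper's complex is spliced from, and your staircase plays the role of the differentials ${}''\!d^r\:{}''\!E^r_{r,i-r+1}\rarrow{}''\!E^r_{0,i}$ --- but your version is more elementary: it needs no hyperhomology spectral sequences, only the long exact sequence of Tor for graded modules, at the price of explicit diagonal bookkeeping and a separate anchoring step, whose necessity you correctly identify as the crux. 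One point worth making explicit in a final write-up: the identification $\Omega/Z\simeq B(1)$ is $A$\+linear only after twisting the module structure on the shift by signs (exactly as in the paper's definition of $M(j)$, with $a\cdot x(j)=(-1)^{jn}(a\cdot x)(j)$); since the sign-twisted module is isomorphic to the untwisted one via $x\mapsto(-1)^{|x|}x$, this does not affect any Tor spaces, and your degree bookkeeping convention $\Tor^A_i(k,B(1))_j=\Tor^A_i(k,B)_{j+1}$ is the correct one.
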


\begin{proof}
 For a graded $A$\+module $M$ and $j\in\mathbb Z$, denote by $M(j)$
the graded $A$\+module with the components $M(j)^m=M^{m-j}$ and
the action of $A$ defined by the rule $a\cdot x(j)=(-1)^{jn} (a\cdot
x)(j)$ for $x\in M$ and $a\in A^n$.
 Consider the complex of graded $A$\+modules
$$
\dsb\lrarrow \Omega(3)\lrarrow\Omega(2)\lrarrow\Omega(1)\lrarrow Z.
$$
 We are interested in the two hyperhomology spectral sequences with
the same limit that are obtained by applying the derived functor
$\Tor^A(k,{-})$ to this complex~$C$.
 Specifically, we have ${}'\!E^2_{pq}$, ${}''\!E^1_{pq}\implies
\Tor^A_{p+q}(k,C)$, where ${}'\!E^2_{pq}=\Tor^A_p(k,H(q))$ and
${}''\!E^1_{pq}=\Tor^A_q(k,\Omega(p))$ for $p>0$, while
${}''\!E^1_{0,q}=\Tor^A_q(k,Z)$.

 By the assumption, the term ${}'\!E^2_{pq}$ is concentrated in
the internal degree~$p+q$, hence the limit term $\Tor^A_i(k,C)$
is concentrated in the internal degree~$i$.
 Furthermore, the term ${}''\!E^1_{pq}$ is concentrated in
the internal degree~$p+q$ for all $p>0$.
 Now any components of the term ${}''\!E^1_{0,i}$ of the internal
degree different from~$i$ have to be killed by the differentials
${}''\!d^r\:{}''\!E^r_{r,i-r+1}\rarrow{}''\!E^r_{0,i}$, hence
the term ${}''\!E^1_{0,i}$ is concentrated in the internal
degrees $i$ and $i+1$.
 It remains to use Lemma.
\end{proof}

 For a nonnegatively graded $k$\+algebra $B$, set
$B'=k\oplus B_1\oplus B_2\oplus\dsb$.
 According to Lemma and~\cite[Theorem~6.1]{Pbogom}, if $A\rarrow B'$
is a homomorphism of graded algebras, the algebra $A$ is Koszul, and
$B_+$ is a Koszul left $A$\+module in the grading shifted by~$1$,
then the algebra $B'$ is Koszul.
 (For another proof of the same result, see Theorem in
Section~\ref{module-quasi} below.)
 Thus in the assumptions of Theorem all the three graded algebras
$\Omega'$, \ $H'$, and $Z'$ are Koszul.

\begin{cor}
 In any of the geometric categories listed above, the algebra $Z$
of closed differential forms in the disk $D$ with logarithmic
singularities along the several coordinate hyperplanes $L_s$
is Koszul.
\end{cor}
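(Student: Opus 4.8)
The plan is to derive the Corollary from the Theorem by checking its hypotheses for the specific datum $(\Omega,d)$, $A$, and $f$ described at the start of Section~\ref{module-koszul}. First I would record that the exterior algebra $A$ on the closed $1$\+forms $dz_s/z_s$ ($s\le v$) and $dz_r$ ($r>v$) is Koszul; this is the classical example, its quadratic dual being the symmetric algebra on the dual generators \cite{PP}. The map $f\:A\rarrow Z$ sending each generator to the corresponding closed $1$\+form is well defined, since all these forms are closed and a product of closed forms is closed, so that the graded-commutative (exterior) relations in $A$ are respected; thus $f$ is a morphism of graded algebras landing in $Z$, and it induces the $A$\+module structures on $\Omega$ and on $H$ described in the excerpt.

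Next I would verify the two Koszulity hypotheses. For $\Omega$ it is stated that $\Omega$ is the free $A$\+module generated by $\Omega^0$, and $\Omega^0$ sits in internal degree~$0$; hence $\Tor^A_i(k,\Omega)\cong\Tor^A_i(k,A)\otimes_k\Omega^0$ vanishes for $i>0$ and is concentrated in internal degree~$0$ for $i=0$, so $\Omega$ is a Koszul $A$\+module. For $H$, writing $A=\Lambda(\xi)\otimes_k\Lambda(\eta)$ with $\xi_s=dz_s/z_s$ and $\eta_r=dz_r$, the module $H$ is the exterior algebra $\Lambda(\xi)$ on which the $\eta_r$ act by zero, i.e.\ $H\cong\Lambda(\xi)\otimes_k k$ with $\Lambda(\eta)$ acting through its augmentation. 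The Künneth formula for $\Tor$ over a tensor product of algebras then gives $\Tor^A(k,H)\cong\Tor^{\Lambda(\xi)}\!\bigl(k,\Lambda(\xi)\bigr)\otimes_k\Tor^{\Lambda(\eta)}(k,k)$. The first factor is $k$ in bidegree $(0,0)$ because $\Lambda(\xi)$ is free over itself, while the second is concentrated on the diagonal by Koszulity of the exterior algebra $\Lambda(\eta)$; hence $\Tor^A_{ij}(k,H)=0$ for $i\ne j$, and $H$ is a Koszul $A$\+module.

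With both hypotheses in hand, the Theorem applies and yields that $Z^+=Z^1\oplus Z^2\oplus\dsb$ is a Koszul left $A$\+module in the grading shifted by~$1$. To turn this into Koszulity of the algebra itself I would invoke the consequence of Lemma and \cite[Theorem~6.1]{Pbogom} recorded just after the Theorem, applied to $B=Z$: since $f\:A\rarrow Z$ is an algebra homomorphism, $A$ is Koszul, and $Z^+$ is a Koszul $A$\+module in the shifted grading, the algebra $Z'=k\oplus Z_1\oplus Z_2\oplus\dsb$ is Koszul.

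Finally I would identify $Z'$ with $Z$. The degree-zero component $Z_0$ consists of the closed $0$\+forms, i.e.\ the locally constant functions on the connected space $D\setminus\bigcup_s L_s$, so $Z_0=k$ and therefore $Z'=Z$; this gives the Corollary. The substantive work has already been done in proving the Theorem and in establishing, via the Poincar\'e lemma in each geometric category, the asserted $A$\+module structures on $\Omega$ and $H$; granting those, the only points inside the present argument that need care are the Künneth computation showing $H$ is Koszul and the observation $Z_0=k$ that permits the passage from $Z^+$ back to the full algebra $Z$.
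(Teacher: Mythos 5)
Your proposal is correct and follows the same route as the paper: verify that $\Omega$ and $H$ are Koszul $A$\+modules, apply the Theorem to get Koszulity of $Z^+$ in the shifted grading, then combine with \cite[Theorem~6.1]{Pbogom} and the observation $Z^0=k$ to conclude. The only difference is that you spell out details the paper leaves as ``clear from the discussion at the beginning of the section'' (the freeness argument for $\Omega$ and the K\"unneth computation for $H$), which is a reasonable and correct elaboration rather than a different method.
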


\begin{proof}
 It is clear from the discussion at the beginning of the section
that the algebra $\Omega$ of logarithmic differential forms in $D$
with respect to $\{L_s\}$ and its algebra of de~Rham cohomology
$H$ are Koszul modules over the exterior algebra $A$ generated
by $dz_s/z_s$ and $dz_r$.
 Besides, $Z^0=k$.
 Thus the assertion follows from Theorem above
and Theorem~6.1 from~\cite{Pbogom}.
\end{proof}

\section{Remarks on Topological Koszulity}  \label{quasi-topological}

 Koszulity of a graded algebra $Z$ is an exactness property of
the bar-complex of $Z$, whose components are direct sums of
tensor products of the grading components of~$Z$.
 However, tensor products of the grading components of an algebra
$Z$ are not always a natural object to consider.
 It is perfectly natural to consider such tensor products when $Z$
is the algebra of functions or forms on an algebraic variety, but
perhaps not so when $Z$ is the algebra of forms on a formal, complex
analytic, or smooth real disk.
 In the latter cases, it might be better to consider completed
tensor products.
 Notice in particular that it is the algebra of closed forms in
a complex analytic disk that appears in the problem of local
description of mixed Hodge--Tate sheaves (see Introduction to
the present paper and the preprint~\cite{Lev}).

 Specifically, one may wish to define the completed tensor product
$Z_{n_1}\widehat\otimes\dsb\widehat\otimes Z_{n_m}$ as the space
of closed forms in $D^m$ of degree~$n_t$ with respect to the $t$\+th
group of variables, with logarithmic singularities along
the hyperplanes $D^t\times L_s\times D^{m-t-1}$.
 Then one may construct the completed bar-complex out of such
completed tensor products and ask oneself whether it is exact
outside of the diagonal.
 Moreover, one may wish to consider the diagonal homology of this
complex as the completed version of the coalgebra Koszul dual to~$Z$.
 The component of degree~$n$ of this completed coalgebra is
the space of all closed forms in $D^n$ of degree~$1$ with respect
to each group of variables with vanishing pull-backs under the maps
$\Id_D^{t-1}\times \Delta_D\times\Id_D^{n-t-1}\:
D^{n-1}\rarrow D^n$, where $\Delta_D\:D\rarrow D^2$
is the diagonal map.

\medskip
 From the point of view of a homological algebraist, topological
algebra as such is a treacherous ground which is better avoided
whenever possible.
 So we propose a simple linear algebra formalism for describing
topological Koszulity in the above sense, independent on any notion
of a topological tensor product.

 With any associative algebra $Z$ with unit over a field~$k$, one can
associate the family of vector spaces $Z_n=Z^{\ot n}$, \ $n\ge0$,
endowed with linear maps induced by the multiplication and unit in~$Z$.
 The structure one obtains in this way is that of a simplicial
$k$\+vector space $Z_\bullet$ endowed with a morphism $k\rarrow
Z_\bullet$ into it from the constant simplicial vector space~$k$.

 When $Z$ is a graded algebra, one obtains a much richer structure.
 A (\emph{quasi-associative graded}) \emph{quasi-algebra} over
a field~$k$ is a family of vector spaces $\Z_{n_1,\dsc,n_m}$, where
$m\ge0$ and $n_t\in\mathbb Z$, endowed with
the \emph{quasi-multiplication} maps
$$
 \Z_{n_1,\dsc,n_m}\lrarrow
 \Z_{n_1,\dsc,n_{t-1},n_t+n_{t+1},n_{t+2},\dsc,n_m} 
$$
and the \emph{quasi-unit} maps
$$
 \Z_{n_1,\dsc,n_m}\lrarrow \Z_{n_1,\dsc,n_t,0,n_{t+1},\dsc,n_m}
$$
satisfying the conventional properties of the multiplication and unit
maps between the tensor products $\Z_{n_1,\dsc,n_m}=
Z_{n_1}\ot_k\dsb\ot_k Z_{n_m}$ of the grading components of
an associative algebra with unit.
 Specifically, the compositions of maps
$$
 \Z_{n_1,\dsc,n_m}\lrarrow
 \Z_{n_1,\dsc,n_s+n_{s+1},\dsc,n_t,\dsc,n_m} \lrarrow
 \Z_{n_1,\dsc,n_s+n_{s+1},\dsc,n_t+n_{t+1},\dsc,n_m}
$$
and
$$
 \Z_{n_1,\dsc,n_m}\lrarrow
 \Z_{n_1,\dsc,n_s,\dsc,n_t+n_{t+1},\dsc,n_m}\lrarrow 
 \Z_{n_1,\dsc,n_s+n_{s+1},\dsc,n_t+n_{t+1},\dsc,n_m}
$$
should coincide; the compositions of maps
$$
 \Z_{n_1,\dsc,n_m}\lrarrow
 \Z_{n_1,\dsc,n_{t-1}+n_t,\dsc,n_m} \lrarrow
 \Z_{n_1,\dsc,n_{t-1}+n_t+n_{t+1},\dsc,n_m}
$$
and
$$
 \Z_{n_1,\dsc,n_m}\lrarrow
 \Z_{n_1,\dsc,n_t+n_{t+1},\dsc,n_m} \lrarrow
 \Z_{n_1,\dsc,n_{t-1}+n_t+n_{t+1},\dsc,n_m}
$$
should coincide.
 The quasi-unit maps must commute with the quasi-multiplication
maps; the compositions
$$
 \Z_{n_1,\dsc,n_m}\lrarrow\Z_{n_1,\dsc,n_t,0,n_{t+1},\dsc,n_m}
 \lrarrow\Z_{n_1,\dsc,n_t+0,n_{t+1},\dsc,n_m} = \Z_{n_1,\dsc,n_m}
$$
and
$$
 \Z_{n_1,\dsc,n_m}\lrarrow\Z_{n_1,\dsc,n_t,0,n_{t+1},\dsc,n_m}
 \lrarrow\Z_{n_1,\dsc,n_t,0+n_{t+1},\dsc,n_m} = \Z_{n_1,\dsc,n_m}
$$
should be the identity maps.
 The component $\Z_\empty$ with $m=0$ indices has to be identified
with~$k$.
 (Cf.\ \cite[Section~4 of Chapter~3]{PP}.)

 A quasi-algebra is said to be \emph{nonnegative} if
$\Z_{n_1,\dsc,n_m}=0$ whenever $n_t<0$ for some $1\le t\le m$.
 A nonnegative quasi-algebra is said to be \emph{positive} if
all its quasi-unit maps are isomorphisms.
 A positive quasi-algebra is determined by its components
$\Z_{n_1,\dsc,n_m}$ with positive indices $n_m>0$ and
the quasi-multiplication maps between them subject to
the quasi-associativity equations (i.~e., those of the above
equations which depend on the quasi-multiplication maps only).

 Let $Z$ be a positively graded associative algebra over~$k$, i.~e.,
$Z$ is nonnegatively graded in the obvious sense and $Z_0=k$.
 Set $Z_+=Z/k$.
 Then one can associate with $Z$ its reduced bar-complex $B$ of
the form
 $$
  k\llarrow Z_+\llarrow Z_+\ot_k Z_+\llarrow Z_+\ot_k Z_+\ot_k Z_+
  \llarrow\dsb,
 $$
consider its grading component $B_n$ of degree~$n$, and take
the tensor product $\B_{n_1,\dsc,n_m}=B_{n_1}\ot_k\dsb\ot_k B_{n_m}$
of several such complexes.
 The components of the complex $\B_{n_1,\dsc,n_m}$ are direct sums
of tensor products of the grading components of the algebra~$Z$.
 Replacing all such tensor products with the components
$\Z_{n'_1,\dsc,n'_{m'}}$ of an arbitrary positive quasi-algebra, one
defines the complex $\B_{n_1,\dsc,n_m}$ as an additive functor on
the abelian category of positive quasi-algebras.
 A positive quasi-algebra $\Z$ is called \emph{Koszul} if all
the complexes $\B_{n_1,\dsc,n_m}$ have no homology except at
the homological degree $n_1+\dsb+n_m$.

 Inverting all the arrows in the above definitions, one defines
(\emph{quasi-coassociative graded}) \emph{quasi-coalgebras} and,
in particular, \emph{positive quasi-coalgebras} and \emph{Koszul
quasi-coalgebras}.
 In particular, the quasi-comultiplications are the maps
$$
 \C_{n_1,\dsc,n_{t-1},n_t+n_{t+1},n_{t+2},\dsc,n_m}\lrarrow
 \C_{n_1,\dsc,n_m},
$$
the quasi-counits are the maps
$$
 \C_{n_1,\dsc,n_t,0,n_{t+1},\dsc,n_m}\lrarrow\C_{n_1,\dsc,n_m},
$$
and Koszulity of positive quasi-coalgebras is defined in
terms of the complexes emulating tensor products of the grading
components of reduced cobar-complexes of positively graded coalgebras.

 The additive categories of Koszul quasi-algebras and Koszul
quasi-coalgebras are equivalent.
 The equivalence functor assigns to a Koszul quasi-algebra $\Z$
the Koszul quasi-coalgebra $\C$ with the components
$\C_{n_1,\dsc,n_m}=H_{n_1+\dsb+n_m}(\B_{n_1,\dsc,n_m})$.
 The natural surjections of complexes
$$
 \B_{n_1,\dsc,n_{t-1},n_t+n_{t+1},n_{t+2},\dsc,n_m}\lrarrow
 \B_{n_1,\dsc,n_m}
$$
induce the quasi-comultiplications in~$\C$.
 The inverse functor is defined in the similar way in terms of
the cobar-complexes of quasi-coalgebras.

 In particular, all the quasi-multiplication maps between components
with nonnegative indices in a Koszul quasi-algebra are surjective.
 This is the quasi-algebra analogue of the condition that the graded
algebra $Z$ be generated by~$Z_1$.
 There is also an analogue of the quadraticity condition, and
there are the dual conditions for Koszul quasi-coalgebras.
 Due to these conditions, the dual Koszul quasi-algebra $\Z$ and
quasi-coalgebra $\C$ are uniquely determined by the vector spaces
$\Z_{1,\dsc,1}\simeq\C_{1,\dsc,1}$ and the exact sequences
$$
 0\lrarrow\C_{1,\dsc,1,2,1,\dsc,1}\lrarrow\C_{1,\dsc,1}\.\simeq
 \.\Z_{1,\dsc,1}\lrarrow\Z_{1,\dsc,1,2,1,\dsc,1}\lrarrow0.
$$
 For the corresponding quasi-coalgebra $\C$ and quasi-algebra $\Z$
to be Koszul, the collection of $n-1$ subspaces $\C_{1,\dsc,1,2,1,
\dsc,1}$ in the vector space $\C_{1,\dsc,1}$ ($n$~units) has to
be distributive for all~$n$ (cf.~\cite[Subsection~2.2]{PV}).

 A quasi-algebra with \emph{external multiplications} is
a quasi-algebra $\Z$ endowed with linear maps
$$
 \Z_{n_1,\dsc,n_t}\ot_k \Z_{n_{t+1},\dsc,n_m}\lrarrow
 \Z_{n_1,\dsc,n_m}
$$
compatible with the identification $\Z_\empty=k$ and commuting
with the quasi-multiplication and quasi-unit maps.
 Quasi-coalgebras with external multiplications are defined in
the similar way.
 This time, arrows are not inverted, i.~e., external
multiplications in a quasi-coalgebra have the form
$$
 \C_{n_1,\dsc,n_t}\ot_k \C_{n_{t+1},\dsc,n_m}\lrarrow
 \C_{n_1,\dsc,n_m}.
$$
 The quasi-algebra or quasi-coalgebra corresponding to a graded
algebra or coalgebra is naturally endowed with external
multiplications; and a quasi-(co)algebra with external
multiplications comes from a (uniquely defined) graded (co)algebra
if and only if all its external multiplication maps are isomorphisms.

 The equivalence between the categories of Koszul quasi-algebras
and Koszul quasi-coalgebras transforms quasi-algebras with
external multiplications to quasi-coalgebras with external
multiplications and back.
 In order to see this, it suffices to construct natural external
multiplications
$$
 \B_{n_1,\dsc,n_t}\ot_k \B_{n_{t+1},\dsc,n_m}\lrarrow
 \B_{n_1,\dsc,n_m}
$$
on the bar-complexes of a quasi-algebra with external
multiplications, and similar external multiplications on
the cobar-complexes of a quasi-coalgebra with external
multiplications.

\section{Module Koszulity for Quasi-Algebras}  \label{module-quasi}

 Let $A$ be a Koszul algebra and $\Z$ be a positive quasi-algebra
over a field~$k$.
 Assume that $A$ acts in $\Z$ from the left in the following sense:
for all $l_p$,~\dots, $l_1$, $n_0$,~\dots, $n_q\ge0$, $\,p$, $q\ge0$
there are linear maps
$$
 A_n\otimes_k\Z_{l_p,\dsc,l_1,n_0,\dsc,n_q}\lrarrow \Z_{l_p,\dsc,l_1,
 n+n_0,n_1,\dsc,n_q},
$$
making $\bigoplus_{n_0=0}^\infty \Z_{l_p,\dsc,l_1,n_0,\dsc,n_q}$
a graded $A$\+module for any fixed
$l_p$,~\dots, $l_1$, $n_1$,~\dots, $n_q$ and commuting with
the quasi-multiplication maps
$$
 \Z_{l_p,\dsc,l_1,n_0,\dsc,n_q}\lrarrow\Z_{l_p,\dsc,l_1,n_0,\dsc,n_t+n_{t+1},
\dsc,n_q}
$$
for all $0\le t<q$.

 For any positively graded associative algebra $Z$ over~$k$,
consider the tensor product $E_{l_p,\dsc,l_0;n_1,\dsc,
n_q} = Z_{l_p}\ot_k\dsb\ot_k Z_{l_0}\ot_k B_{n_1}\ot_k\dsb\ot_k B_{n_q}$
of the grading components of the algebra $Z$ and its reduced
bar-complex~$B$ (with the trivial coefficients).
 The terms of the complex $E_{l_p,\dsc,l_0;n_1,\dsc,n_q}$ are direct sums
of tensor products of grading components of the algebra $Z$;
replacing all such tensor products with the components
$\Z_{n_1',\dsc,n'_{m'}}$ of a positive quasi-algebra $\Z$, we obtain
the complex $\E_{l_p,\dsc,l_0;n_1,\dsc,n_q}$.
 The action of $A$ in the quasi-algebra $\Z$ induces its action
$$
 A_n\otimes_k\E_{l_p,\dsc,l_0;n_1,\dsc,n_q}\lrarrow \E_{l_p,\dsc,l_1,
 n+l_0;n_1,\dsc,n_q}
$$
on the complexes $\E_{l_p,\dsc,l_0;n_1,\dsc,n_q}$, hence also on their
cohomology.

\begin{thm}
 Suppose that the graded $A$\+module
$$\textstyle
 \bigoplus_{l_0=1}^\infty H_{n_1+\dsb+n_q}(\E_{l_p,\dsc,l_0;n_1\dsc,n_q})
$$
is Koszul in the grading shifted by~$1$ for any fixed\/
$l_p$,~\dots, $l_1$, $n_1$,~\dots, $n_q\ge1$, $\,p$,  $q\ge0$.
 Then the quasi-algebra $\Z$ is Koszul.
\end{thm}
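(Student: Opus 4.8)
The plan is to imitate the proof of the Theorem of Section~\ref{module-koszul}, replacing the single module $Z$ by the family of complexes $\E$ and realising the functor $\Tor^A(k,{-})$ through the reduced bar-resolution of~$A$; write $N=n_1+\dsb+n_q$. First I would restate the hypothesis by means of the Lemma: for all $p,q\ge0$ and all fixed $l_p,\dsc,l_1$, $n_1,\dsc,n_q\ge1$, the graded $A$-module $\bigoplus_{l_0\ge0}H_N(\E_{l_p,\dsc,l_0;n_1,\dsc,n_q})$ has $\Tor^A_i(k,{-})$ concentrated in the internal degrees $i$ and $i+1$; its summand $l_0=0$ is $H_N(\E_{l_p,\dsc,l_1;n_1,\dsc,n_q})$, the top homology of the complex with one fewer algebra factor, and this identification links the modules for successive numbers of algebra factors. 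Koszulity of $\Z$ means that every $\B_{n_1,\dsc,n_m}$ is acyclic outside homological degree $n_1+\dsb+n_m$; since $\B_{n_1,\dsc,n_m}=\E_{0;n_1,\dsc,n_m}$ — the case $p=0$, $l_0=0$, in which the zero algebra factor is inert because the quasi-unit maps are isomorphisms — it suffices to prove the assertion $P(q)$ that every $\E_{l_p,\dsc,l_0;n_1,\dsc,n_q}$ (with $l_p,\dsc,l_1\ge1$, $l_0\ge0$, $n_t\ge1$) is acyclic outside degree~$N$. I would argue by induction on~$q$, the base $P(0)$ being vacuous.

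The heart of the matter is the single-block case $P(1)$. Here I would regard the one bar factor $B_n$, together with the action of $A$ on the adjacent algebra factor $l_0$, as a quasi-algebra incarnation of the relative bar construction of $\Z$ over~$A$: the $\E$-complexes carrying several algebra factors play the role of the bar-resolution of~$A$ in Section~\ref{module-koszul}, and their top homologies $H_N(\E)$ are precisely the modules the restated hypothesis controls. Tensoring the block bar complex with the reduced bar-resolution of~$A$ and running the two hyperhomology spectral sequences of the resulting bicomplex, one input is the Koszulity of these top-homology modules and the other is the Koszulity of~$A$ itself (making the $\Tor^A$-direction diagonal); the off-diagonal homology of the $l_0=0$ fibre — the analogue of the module $Z$ — is then annihilated by the differentials issuing from the positive-$l_0$ part, exactly as the term ${}''\!E^1_{0,i}$ was forced into degrees $i$ and $i+1$ in Section~\ref{module-koszul}, and the Lemma finishes the case. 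For $q\ge2$ I would peel off the last bar factor $B_{n_q}$: by $P(1)$ its homology is concentrated in degree $n_q$, leaving a complex of $q-1$ bar factors whose coefficients are again top homologies of the kind the hypothesis governs, so that $P(q-1)$ together with a further application of the same comparison completes the step.

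The hard part will be $P(1)$, compounded by the fact that the hypothesis controls only the \emph{top} homology $H_N(\E)$. One must genuinely construct the relative bar construction in the combinatorial quasi-algebra language, identify its homology with the modules $\bigoplus_{l_0}H_N(\E_{\dsc;\dsc})$, and match all internal gradings and shifts so that the two spectral sequences share one abutment; and one must organise the induction so that every homology module entering the spectral sequences is a top homology of an $\E$-complex with extra algebra factors, the only objects the hypothesis provides. Verifying that this induction is well-founded — that the lower homology is in every instance resolved by previously established cases of~$P$ — is the point on which the argument turns.
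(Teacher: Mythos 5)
You have identified the right ingredients (a relative bar construction mixing the bar resolution of $A$ with the quasi-algebra, the Lemma, and top homologies of $\E$\+complexes as the only handle the hypothesis gives), but there is a genuine gap, and it is the one you flag yourself: the induction is organized in a way that cannot be closed, and its crux is left unproven. The paper does not induct on the number $q$ of bar blocks. It inducts on the \emph{total internal degree} $n_0+\dsb+n_q$, uniformly over all $p$ and $q$ at once. That choice is what makes the induction well-founded: in the step for $\E_{l_p,\dsc,l_1;n_0,\dsc,n_q}$ one compares it with an auxiliary complex $\F_{l_p,\dsc,l_1;n_0;n_1,\dsc,n_q}$ (the internal-degree-$n_0$ part of the bicomplex combining the bar resolution of $A$ with coefficients in $Z$ and the bar complex of $Z$ with coefficients in $Z$, inserted into the quasi-algebra), and every complex entering that comparison --- $\E_{l_p,\dsc,l_1;n_1,\dsc,n_q}$, the complexes $\E_{l_p,\dsc,l_1;n_0-n,n_1,\dsc,n_q}$, and the complexes $\bigoplus_{l_0\ge1}\E_{l_p,\dsc,l_1,l_0;n_0-n,n_1,\dsc,n_q}$ with $n\ge1$, whose top homologies are the modules of the hypothesis --- has strictly smaller total degree but possibly \emph{more} algebra factors and the \emph{same} number of bar blocks. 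Consequently your $P(1)$ is not a base case that can be settled once and then used for peeling: it requires its own internal induction on $n_0$, during which single-block complexes with arbitrarily many additional algebra factors appear; organizing exactly this is the content you defer with ``one must organise the induction so that every homology module entering the spectral sequences is a top homology.''

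The peeling step $P(q-1)\Rightarrow P(q)$ is also not available as stated. In a quasi-algebra the components $\Z_{n'_1,\dsc,n'_{m'}}$ are not tensor products, so $\E_{l_p,\dsc,l_0;n_1,\dsc,n_q}$ does not factor as a $(q-1)$\+block complex tensored over~$k$ with a one-block complex, and no K\"unneth argument exists. Filtering along the last block, the columns are indeed single-block complexes $\E_{l_p,\dsc,l_0,m_1,\dsc,m_j;n_q}$ covered by $P(1)$, but the resulting $E_1$\+rows are $\E$\+type complexes over the new coefficient system $\T_{m_1,\dsc,m_j}=H_{n_q}(\E_{l_p,\dsc,l_0,m_1,\dsc,m_j;n_q})$, which is not a positive quasi-algebra in the sense of Section~\ref{quasi-topological} (for instance, $\T_\empty\simeq\C_{n_q}\ne k$); neither $P(q-1)$ for $\Z$ nor the Theorem itself applies to such rows, and transferring the hypothesis from $\Z$ to $\T$ is a further nontrivial step you do not address. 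The paper's single induction on total degree avoids all of this: the module-Koszulity hypothesis is used exactly once, in showing that the subcomplex $\F'\subset\F$ (the part where the index $n'_0$ adjacent to the algebra factors is positive) has a filtration whose graded pieces are bar complexes of $A$ with coefficients in $\bigoplus_{l_0\ge1}H_{n_0+\dsb+n_q-n}(\E_{l_p,\dsc,l_1,l_0;n_0-n,n_1,\dsc,n_q})$, hence have homology only in degree $n_0+\dsb+n_q-1$, while the quotient $\F/\F'$ is handled by the induction hypothesis alone. If you reorganize your argument around induction on the total degree, the spectral-sequence skeleton you describe becomes precisely the paper's two-step comparison $\F'\subset\ker(\F\to\E)\subset\F$.
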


\begin{proof}
 Let us show that the homology of the complex
$\E_{l_p,\dsc,l_1;n_0,\dsc,n_q}$
are concentrated in the degree $n_0+\dsb+n_q$ for all $l_p$,~\dots,
$l_1$, $n_0$,~\dots, $n_q\ge1$, $\,p$, $q\ge0$.
 We will argue by induction in $n_0+\dsb+n_q$.

 Imagine that we have a morphism of positively graded algebras
$A\rarrow Z$ and consider the bicomplex
$F=\bigoplus_{t',t''=0}^\infty A_+^{\ot t'}\ot_k Z\ot_k Z_+^{\ot t''}$
with one differential induced by the differential of the reduced
bar-complex of $A$ with coefficients in the left $A$\+module $Z$
and the other one induced by the differential of the reduced
bar-complex of $Z$ with coefficients in the right $Z$\+module~$Z$.
 Consider the component $F_{n_0}$ of the complex $F$ of the (total)
internal degree~$n_0$ and take its tensor product
$Z_{l_p}\ot_k\dsb\ot_k Z_{l_1}\ot_kF_{n_0}\ot_k B_{n_1}\ot_k
\dsb\ot_k B_{n_q}$ with the components of the algebra $Z$ and its
reduced bar-complex $B$ (with the trivial coeffcients).
 Denote the complex so obtained by $F_{l_p,\dsc,l_1;n_0;n_1,\dsc,n_q}$.
 The components of this complex are direct sums of tensor products
of the grading components of the algebras $A$ and $Z$.
 Moving the tensor products of components of the algebra $A$
to the left in these tensor products and replacing all the tensor
products of components of an algebra $Z$ with the components
$\Z_{n'_1,\dsc,n'_{m'}}$ of a positive quasi-algebra, we obtain
the complex $\F_{l_p,\dsc,l_1;n_0;n_1,\dsc,n_q}$ depending as an additive
functor on the quasi-algebra $\Z$ with a left action of~$A$.

 Using the quasi-algebra version of the contracting homotopy of
the bar-complex of $Z$ with coefficients in $Z$, which is induced
by the isomorphisms $\Z_{l_p,\dsc,l_1,n'_0,\dsc,n'_{q'}}\rarrow
\Z_{l_p,\dsc,l_1,0,n'_0,\dsc,n'_{q'}}$, one can see that
the complex $\F_{l_p,\dsc,l_1;n_0;n_1,\dsc,n_q}$ is quasi-isomorphic to
the tensor product of the component of internal degree~$n_0$ of
the reduced bar-complex of $A$ (with the trivial coefficients)
and the complex $\E_{l_p,\dsc,l_1;n_1,\dsc,n_q}$.
 So it follows from the induction hypothesis that the homology of
$\F_{l_p,\dsc,l_1;n_0;n_1,\dsc,n_q}$ is concentrated in the homological
degree $n_0+\dsb+n_q$.
 On the other hand, there is a natural surjective morphism of
complexes $\F_{l_p,\dsc,l_1;n_0;n_1,\dsc,n_q}\rarrow
\E_{l_p,\dsc,l_1;n_0,\dsc,n_q}$.
 We will show that the homology of the kernel of this map is
concentrated in the homological degrees $n_0+\dsb+n_q$ and
$n_0+\dsb+n_q-1$; so it will follow that the homology of
$\E_{l_p,\dsc,l_1;n_0,\dsc,n_q}$ is concentrated in the degree
$n_0+\dsb+n_q$.

 Indeed, let $\F'_{l_p,\dsc,l_1;n_0;n_1,\dsc,n_q}$ be the subcomplex of
$\F_{l_p,\dsc,l_1;n_0;n_1,\dsc,n_q}$ consisting of all the components in
which all the indices of the tensor factor
$\Z_{l_p,\dsc,l_1,n'_0,\dsc,n'_{q'}}$ are positive (i.~e., $n'_0\ge1$).
 Then the quotient complex of $\F_{l_p,\dsc,l_1;n_0;n_1,\dsc,n_q}$ by
$\F'_{l_p,\dsc,l_1;n_0;n_1,\dsc,n_q}$ is isomorphic to the direct sum
of the tensor products of the component of internal grading~$n$
of the reduced bar-complex of $A$ (with the trivial coeffcients)
and the complex $\E_{l_p,\dsc,l_1;n_0-n,n_1,\dsc,n_q}$.
 It follows from the induction assumption that the kernel of the map
$$
 \F_{l_p,\dsc,l_1;n_0;n_1,\dsc,n_q}/\F'_{l_p,\dsc,l_1;n_0;n_1,\dsc,n_q}
 \lrarrow \E_{l_p,\dsc,l_1;n_0,\dsc,n_q}
$$
has homology in the homological grading $n_0+\dsb+n_q$ only.

 On the other hand, the complex $\F'_{l_p,\dsc,l_1;n_0;n_1,\dsc,n_q}$ has
a finite increasing filtration whose indices are the sums
$n'_1+\dsb +n'_{q'}$ of indices of the tensor factors
$\Z_{l_p,\dsc,l_1,n'_0,\dsc,n'_{q'}}$. 
 The associated quotients of this filtration are the components of
internal grading~$n$ of the bar-complexes of $A$ with coefficients
in the complexes of graded $A$\+modules
$\bigoplus_{l_0=1}^\infty \E_{l_p,\dsc,l_1,l_0;n_0-n,n_1,\dsc,n_q}$.
 It follows from the induction assumption that they are quasi-isomorphic
to the components of internal grading~$n$ of the bar-complexes of $A$
with coefficients in the graded modules
$$\textstyle
 \bigoplus_{l_0=1}^\infty H_{n_0+\dsb+n_q-n}
 (\E_{l_p,\dsc,l_1,l_0;n_0-n,n_1,\dsc,n_q}),
$$
placed in the homological grading $n_0+\dsb+n_q-n$.
 By the module Koszulity assumption of Theorem, their homology are
concentrated in the homological degree $n_0+\dsb+n_q-1$.
(Cf.~\cite[proof of Theorem~6.1]{Pbogom}.)
\end{proof}

 Let us finally return to the case when $\Z_{n_1,\dsc,n_m}=
Z_{n_1}\widehat\otimes\dsb\widehat\otimes Z_{n_m}$ is
the quasi-algebra of closed forms in a formal, complex analytic, or
smooth real disk, as defined in Section~\ref{quasi-topological}.
 One defines the quasi-multiplication in $\Z$ in terms of
the diagonal embedding $\Delta_D\: D\rarrow D\times D$.
 The vector space $H_{n_1+\dsb+n_q}(\E_{l_p,\dsc,l_0;n_1\dsc,n_q})$
is described as the space of closed forms in $D^{p+1+n_1+\dsb+n_q}$
of the degrees $l_1$,~\dots, $l_0$ with respect to the first $p+1$
groups of variables and $1$ with respect to the last
$n_1+\dsb+n_q$ groups, regular outside of the hyperplanes
$D^t\times L_s\times D^{p+1+n_1+\dsb+n_q-t}$ and having at most
logarithmic singularities along these hyperplanes,
with vanishing inverse images with respect to the maps
$$
 \Id_D^{p+n_1+\dsb+n_{t-1}+s}\times\Delta_D\times
 \Id_D^{n_t-s-1+n_{t+1}+\dsb+n_q}\:D^{p+n_1+\dsb+n_q}\rarrow
 D^{p+1+n_1+\dsb+n_q}
$$
for all $1\le s\le n_t-1$.

 To show that such spaces of closed forms satisfy the module
Koszulity assumption of Theorem above, it suffices to consider
them as spaces of closed forms of the degree~$l_0$ in the disk $D$
with coordinates from the (p+1)\+th group of variables, depending
on the variables from all the other groups as parameters.
 One has to include such parameters into the assertion of Theorem
from Section~\ref{module-koszul} and use the Poincar\'e lemma
with parameters (provable by the usual integration procedure,
which preserves the conditions imposed on the dependence of our
forms from parameters).

 Hence $\Z$ is Koszul, and its quadratic dual quasi-coalgebra $\C$
described in Section~\ref{quasi-topological} is Koszul, too.
 Besides, $\Z$ and $\C$ are obviously a quasi-algebra and
a quasi-coalgebra with external multiplications, and their
external multiplication structures correspond to each other
under the Koszul duality.


\begin{thebibliography}{9}

\bibitem{BGS}
 A.~Beilinson, V.~Ginzburg, W.~Soergel.
     Koszul duality patterns in representation theory.
\textit{Journ.\ Amer.\ Math.\ Soc.}\ \textbf{9}, \#2, p.~473--527,
1996.

\bibitem{Lev}{\hbadness=1500
 A.~Levin.
   Note on $\mathbb R$\+Hodge-Tate sheaves.
Preprint of Max-Planck-Institut f\"ur Mathematik (Bonn)
MPIM2001-37, May 2001.
 Available from
\texttt{http://www.mpim-bonn.mpg.de/pre\-prints/send?bid=1149}\,.
\par}

\bibitem{PV}
 L.~Positselski, A.~Vishik.
   Koszul duality and Galois cohomology.
\textit{Math.\ Research Letters} \textbf{2}, \#6, p.~771--781, 1995.
\texttt{arXiv:alg-geom/9507010}


\bibitem{Pbogom}
 L.~Positselski.
   Koszul property and Bogomolov's conjecture.
\textit{Intern.\ Math.\ Research Notices} \textbf{2005}, \#31,
p.~1901--1936.

\bibitem{PP}
 A.~Polishchuk, L.~Positselski.
     Quadratic algebras.
University Lecture Series, 37.  AMS, Providence, RI, 2005.




\bibitem{Pr}
 S.~Priddy.
   Koszul resolutions.
\textit{Transactions AMS} \textbf{152}, \#1, p.~39--60, 1970. 

\end{thebibliography}
\end{document}